\newcommand{\assign}{:=}
\newcommand{\nin}{\not\in}
\newcommand{\tmem}[1]{{\em #1\/}}
\newcommand{\tmop}[1]{\ensuremath{\operatorname{#1}}}
\newcommand{\tmstrong}[1]{\textbf{#1}}
\newcommand{\tmtextit}[1]{{\itshape{#1}}}
\newenvironment{descriptionlong}{\begin{description} }{\end{description}}
\newenvironment{enumeratealpha}{\begin{enumerate}[a{\textup{)}}] }{\end{enumerate}}
\newenvironment{enumerateroman}{\begin{enumerate}[i.] }{\end{enumerate}}
\newenvironment{itemizeminus}{\begin{itemize} }{\end{itemize}}
\newenvironment{proof}{\noindent\textbf{Proof\ }}{\hspace*{\fill}$\Box$\medskip}
\newtheorem{proposition}{Proposition}
{\theorembodyfont{\rmfamily}\newtheorem{remark}{Remark}}
\newtheorem{theorem}{Theorem}
\begin{document}

\title{Rare events, exponential hitting times and extremal indices via
spectral perturbation}\author{Gerhard Keller}\date{December 22, 2011}\maketitle

\begin{abstract}
  We discuss how an eigenvalue perturbation formula for
  transfer operators of dynamical systems is related to exponential hitting
  time distributions and extreme value theory for processes generated by
  chaotic dynamical systems. We also list a number of piecewise expanding
  systems to which this general theory applies and discuss the prospects to
  apply this theory to some classes of piecewise hyperbolic systems.
\end{abstract}

\section{Introduction}

The first occurence of a rare event $A_{\varepsilon}$ in a time
discrete dynamical system $T : M \rightarrow M$ can be described in
terms of the first hitting time $\tau_{\varepsilon} (x) = \inf \{i
\geqslant 0 : T^i x \in A_{\varepsilon} \}$ $(x \in M)$. In order to
justify the adjective ``rare'' one should assume that $\mu
(A_{\varepsilon})$ is small, where $\mu$ is a suitable reference
probability measure on $M$. For many dynamical systems it is then
known that the law of $\tau_{\varepsilon}$ under $\mu$ is nearly
exponential with a parameter $\lambda_{\varepsilon}$ that is closely
related to $\mu (A_{\varepsilon})$, see {\cite{coelho2000}} for an
excellent review. In this note we describe how asymptotic expressions
(as $\varepsilon\to0$) for the exponent $\lambda_{\varepsilon}$ of
exponential hitting time statistics can be derived from spectral
perturbation results for the Perron Frobenius operator of the
dynamical system, provided the operator has a spectral gap on a
suitable space of (generalized) functions.

\subsection{Perturbation theory for dynamical systems}

In 1999, C. Liverani and the author {\cite{KL1999}} published a spectral
perturbation theorem for linear operators that is particularly suited to deal
with transfer operators of dynamical systems: it does not require the
perturbation to be small in operator norm (which is rarely the case for
transfer operators if the underlying dynamics are perturbed). Of course there
is a price to pay for this: the operators must satisfy a uniform inequality
which is known as Lasota-Yorke inequality in dynamical systems or as
Doeblin-Fortet inequality in the theory of Markov operators, and the
conclusions are not as sharp as one might expect, namely the perturbed
spectral quantities do not depend Lipschitz-continuously on the size of the
perturbation.

There are in fact examples showing that the latter drawback of this
perturbation theorem is not just an artifact of the proof
{\cite[Theorem 6.1 and Footnote 7]{baladi2007}}. Therefore, a
perturbation result for the leading eigenvalue of transfer operators
published by C. Liverani and the author {\cite{KL2009}} in 2009 might
seem contradictory at first sight: under rather general assumptions
that are met in particular when the aforementioned spectral
perturbation theorem applies, this result shows that the leading
eigenvalue of a transfer operator is differentiable as a function of
the size of the perturbation at the unperturbed operator. This covers
the case where the leading eigenvalue becomes smaller than $1$ due to
a small leak in the phase space of the system so that this eigenvalue
determines the escape rate of the open system.

\subsection{Escape rates in open dynamical systems}

The study of escape rates (also called decay rates) in open dynamical systems
has different aspects. Starting with {\cite{PY1979,pianigiani1981}} there are
numerous publications highlighting the role these rates play for conditionally
invariant measures, see {\cite{coelho2000,DY2006}} for reviews and further
perspectives. In {\cite{keller1984}} and {\cite[section 9.6C]{keller1989}}
escape rates were characterized in terms of topological pressure, and
{\cite{DWY2011}} is a recent systematic account of this approach.

Here, on the contrary, the focus is on precise approximations of the escape
rate for small holes. This work was motivated by a result of Bunimovich and
Yurchenko {\cite[Theorem 4.6.1]{BY2011}} that I first learned about during a
conference at the Erwin-Schr\"odinger Institute in 2008. It gave rise to the
paper {\cite{KL2009}}. Since that paper was written, not only several other
authors considered related problems on escape rates and intermittent phenomena
{\cite{BB2010,BB2011,BV2011,DWY2011,FP2010,FMS2010,FS2010}}, but I also had
the opportunity to learn about the close relations between escape rates,
hitting and return time statistics and extremal indices for extreme value
distributions{\footnote{I want to mention in particular the talk by J. Freitas
``Laws of rare events for chaotic dynamical systems'' and stimulating
conversations with L. Bunimovich and M. Demers at the conference ``Large
deviations in dynamical systems'' at the CIRM in June 2011.}}
{\cite{FFT2009,FFT2010a,FFT2010b}}. In \ particular the extremal index turns
out to be a quantity that appears also in the eigenvalue perturbation formula
from {\cite{KL2009}}, and the sharp error term for exponential hitting times
that was proved for mixing processes in {\cite{abadi2004}} is also closely
related to the same formula.

\subsection{Outline of this paper}

The goal of this note is to provide details for the interrelations described
above in the particular case where the dynamics of open systems with small
holes are described by a perturbed Perron Frobenius operator. In particular,
\begin{itemizeminus}
  \item some details are given why the approximation formula for the leading
  eigenvalue is valid in the perturbative setting from {\cite{KL1999}}
  (section \ref{sec:perturb}),
  
  \item a slight extension of the arguments from {\cite{KL2009}} is given
  that provides estimates for the error term in the exponential approximation
  of the distribution of hitting times (section \ref{subsec:sharp}),
  
  \item it is argued that exponential hitting time statistics (including a
  formula for the extremal index) follow from the formula for the leading
  eigenvalue (section \ref{subsec:extreme}),
  
  \item and a number of concrete settings are listed to which the perturbation
  formula applies (section \ref{sec:review}).
\end{itemizeminus}
\section{A review of some perturbation results}\label{sec:perturb}

The paper {\cite{KL2009}} provides a first order approximation (as
$\varepsilon \rightarrow 0$) for the leading eigenvalues of the
Perron-Frobenius operator of an open dynamical system with a hole of ``size''
$\varepsilon$. The main result is formulated as a perturbation theorem for the
leading eigenvalue of a family $(P_{\varepsilon})_{\varepsilon \in E}$ of
linear operators on a Banach space $(V, \|.\|)$, where $E$ is a set of
parameters (equipped with some topology) containing a parameter called $0$
such that $P_0$ describes the dynamics without a hole. This theorem applies
whenever the following assumptions (\ref{eq:A1}) -- (\ref{eq:A6}) can be
verified:

There are $\lambda_{\varepsilon} \in \mathbbm{C}$, $\varphi_{\varepsilon} \in
V$, a bounded linear functional $\nu_{\varepsilon}:V\to{\mathbbm C}$, and linear operators $Q_{\varepsilon} : V \to
V$ such that
\begin{equation}
  \lambda_{\varepsilon}^{- 1} P_{\varepsilon} = \varphi_{\varepsilon} \otimes
  \nu_{\varepsilon} + Q_{\varepsilon}\text{ (where we assume $\lambda_0=1$),} \label{eq:A1}
\end{equation}
\begin{equation}
  P_{\varepsilon} (\varphi_{\varepsilon}) = \lambda_{\varepsilon}
  \varphi_{\varepsilon}, \hspace{0.75em} \nu_{\varepsilon} P_{\varepsilon} =
  \lambda_{\varepsilon} \nu_{\varepsilon}, \hspace{0.75em} Q_{\varepsilon}
  (\varphi_{\varepsilon}) = 0, \hspace{0.75em} \nu_{\varepsilon}
  Q_{\varepsilon} = 0, \label{eq:A2}
\end{equation}
\begin{equation}
  \sum_{n = 0}^{\infty} \sup_{\varepsilon \in E} \|Q_{\varepsilon}^n \|= : C_1
  < \infty, \label{eq:A3}
\end{equation}
Observe that assumptions (\ref{eq:A1}) and (\ref{eq:A2}) imply
$\nu_{\varepsilon} (\varphi_{\varepsilon}) = 1$ for all
$\varepsilon$.{\footnote{Assumption (\ref{eq:A3}) is clearly equivalent to the
seemingly stronger assumption that $\sup_{\varepsilon \in E}
\|Q_{\varepsilon}^n \|$ decays exponentially as $n \rightarrow \infty$. The
form of assumption (\ref{eq:A3}) chosen here underlines however, that formula
(\ref{eq:KL2009simple}) can be proved as well under slightly different
assumptions that allow in particular to use a weaker operator norm in
(\ref{eq:A3}) which is not submultiplicative, see {\cite[Remark
2.2]{KL2009}}.}} The fourth assumption relates the ``size'' of
$\varphi_{\varepsilon}$ to that of $\varphi_0$:
\begin{equation}
  \exists C_2 > 0 \forall \varepsilon \in E : \nu_0 (\varphi_{\varepsilon}) =
  1 \hspace{1em} \text{and \ \ } \| \varphi_{\varepsilon} \| \leqslant C_2 <
  \infty . \label{eq:A4}
\end{equation}
Now let
\[ \Delta_{\varepsilon} \assign \nu_0 (P_0 - P_{\varepsilon}) (\varphi_0) ,
\]
where $\nu_0(P_0-P_\epsilon)$ denotes the composition $\nu_0\circ(P_0-P_\epsilon):V\to{\mathbbm C}$.
Then the last two assumptions are
\begin{equation}
  \lim_{\varepsilon \rightarrow 0} \| \nu_0 (P_0 - P_{\varepsilon})\|= 0
  \label{eq:A5},
\end{equation}
\begin{equation}
  \| \nu_0 (P_0 - P_{\varepsilon})\| \cdot \|(P_0 - P_{\varepsilon}) \varphi_0
  \| \leqslant \tmop{const} \cdot | \Delta_{\varepsilon} | \label{eq:A6}
\end{equation}
Under these assumptions the main result of {\cite{KL2009}} is the formula
\begin{equation}
  1 - \lambda_{\varepsilon} = \Delta_{\varepsilon} \theta (1 + o(1)) \text{ \ \ in the limit \ } \varepsilon \rightarrow 0
  \label{eq:KL2009simple}
\end{equation}
where $\theta$ is a constant that takes care of short time correlations, see
section \ref{sec:KL2009} for more details.

\subsection{Verifying assumptions (\ref{eq:A1}) -- (\ref{eq:A4})}

One way to verify assumptions (\ref{eq:A1}) - (\ref{eq:A4}) that was already
taken in {\cite{KL2009}} and {\cite{FP2010}}, is to use the spectral
perturbation theorem from {\cite{KL1999}}. It holds (not only) under the
following assumptions:
\begin{enumerateroman}
  \item There are constants $\alpha\in(0,1)$, $D > 0$ and a second norm $| . |_w \leqslant
  \|.\|$ on $V$ (it is enough that this is a seminorm) such that:
  \begin{equation}
    \text{The residual spectrum of each $P_{\varepsilon}$ is contained in $\{z
    \in \mathbbm{C}: |z| \leqslant \alpha\}$. } \label{eq:B3}
  \end{equation}
  \begin{equation}
    \forall \varepsilon \in E\ \forall f \in V\ \forall n \in \mathbbm{N}:
    |P_{\varepsilon}^n f|_w \leqslant D\,|f|_w \label{eq:B1}
  \end{equation}
  \begin{equation}
    \forall \varepsilon \in E\ \forall f \in V\
    \forall n \in \mathbbm{N}: \|P_{\varepsilon}^n f\| \leqslant D \alpha^n
    \|f\|+ D|f|_w \label{eq:B2}
  \end{equation}
  \item $\text{There is a monotone upper semi-continuous function } \pi : E
  \rightarrow [0, \infty) \text{ such that }$
  \begin{equation}
    \lim_{\varepsilon \rightarrow 0} \pi_{\varepsilon} = 0 \text{ \ \ and \ \
    } \forall f \in V\ \forall \varepsilon \in E : |P_{\varepsilon} f - P_0
    f|_w \leqslant \pi_{\varepsilon} \|f\|. \label{eq:B4}
  \end{equation}
\end{enumerateroman}
An equivalent way to rewrite the last condition is:
\[ \forall \varepsilon \in E : \interleave P_{\epsilon} - P_0 \interleave
   \leqslant \pi_{\varepsilon} \]
where $\interleave R \interleave \assign \sup \{|R f|_w : f \in V, \|f\|
\leqslant 1\}$ for any linear operator $R : V \rightarrow V$.\\

The main result of {\cite{KL1999}} is the following theorem:

\begin{theorem}
  \label{theo:KL99}Suppose that $(P_{\varepsilon})_{\varepsilon \in E}$ is a
  family of linear operators on $V$ satisfying assumptions
  (\ref{eq:B3})-(\ref{eq:B4}). Fix $\delta > 0$ and $r \in (\alpha, 1)$, and
  let $\rho \assign \log \frac{r}{\alpha} / \log \frac{1}{\alpha} \in (0, 1)$.
  Then there are constants $a = a (r) > 0$, $b = b (\delta, r) > 0$, $c = c
  (\delta, r) > 0$ and $d = d (\delta, r) > 0$ and a neighborhood $E_{\delta,
  r}$ of $0$ in $E$ such that for $\varepsilon \in E_{\delta, r}$ and $z \in
  \mathbbm{C}$ with $|z| > r$ and $\tmop{dist} (z, \sigma (P_0)) > \delta$,
  \begin{equation}
    \|(z - P_{\varepsilon})^{- 1} f\| \leqslant a\|f\|+ b|f|_w  \text{ for all
    } f \in V \label{eq:KL99-1}
  \end{equation}
  and
  \begin{equation}
    \interleave (z - P_{\varepsilon})^{- 1} - (z - P_0)^{- 1} \interleave
    \leqslant \pi_{\varepsilon}^{\rho} \cdot \left( c\|(z - P_0)^{- 1} \|+
    d\|(z - P_0)^{- 1} \|^2 \right) . \label{eq:KL99-2}
  \end{equation}
\end{theorem}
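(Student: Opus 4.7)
The proof would proceed in two stages: first establish the uniform strong-norm resolvent bound (\ref{eq:KL99-1}), then deduce the triple-bar perturbation estimate (\ref{eq:KL99-2}) by iterating the resolvent identity and optimising in the length of the iteration.

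For (\ref{eq:KL99-1}), my starting point is the partial Neumann identity
\[
(z-P_\varepsilon)^{-1} = \sum_{n=0}^{N-1} z^{-n-1} P_\varepsilon^n + z^{-N}\, P_\varepsilon^N (z-P_\varepsilon)^{-1},
\]
valid whenever $z$ is in the resolvent set of $P_\varepsilon$. Applying (\ref{eq:B2}) to the partial sum and also to the $P_\varepsilon^N$ factor inside the remainder, and using $|z|>r>\alpha$, the right-hand side acting on $f$ is dominated by terms of the form $C(N)(\|f\|+|f|_w)+D(\alpha/r)^N\|(z-P_\varepsilon)^{-1}f\|$ plus a weak-norm remainder. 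Choosing $N$ with $D(\alpha/r)^N<\tfrac12$ absorbs the unknown strong-norm quantity to the left and yields (\ref{eq:KL99-1}) with constants $a,b$ that are uniform in $\varepsilon$ and in $z$ over the stated region --- provided one has the a priori knowledge that $(z-P_\varepsilon)^{-1}$ exists and is finite there. Supplying this a priori information is what I expect to be the main obstacle: since eigenvalues of $P_\varepsilon$ need not coincide with those of $P_0$, one has to rule out stray perturbed eigenvalues in $\{|z|>r,\ \mathrm{dist}(z,\sigma(P_0))>\delta\}$. My plan is to invert
\[
(z-P_\varepsilon)^{-1} = (z-P_0)^{-1}\bigl(I-(P_\varepsilon-P_0)(z-P_0)^{-1}\bigr)^{-1}
\]
by a Neumann series; since $(P_\varepsilon-P_0)$ is small only in the triple-bar norm, its convergence has to be obtained by the same absorption trick, iterating and feeding a high iterate through (\ref{eq:B2}) to trade lack of strong-norm smallness for weak-norm smallness at the cost of a factor $\alpha^n$.

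For (\ref{eq:KL99-2}) I would start from the basic identity
\[
(z-P_\varepsilon)^{-1}-(z-P_0)^{-1} = (z-P_\varepsilon)^{-1}(P_\varepsilon-P_0)(z-P_0)^{-1}.
\]
A naive application of (\ref{eq:B4}) yields only a bound linear in $\pi_\varepsilon$ in the triple-bar norm; the sub-linear exponent $\rho$ arises from a further re-expansion. Specifically, expand the inner $(z-P_\varepsilon)^{-1}$ once more as a partial Neumann sum of length $N$, so that the remainder picks up a factor $P_\varepsilon^N$, and apply (\ref{eq:B2}) to split the resulting strong-norm cost as $(\alpha/r)^N\|\cdot\|+r^{-N}|\cdot|_w$. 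Combined with the single $\pi_\varepsilon$ coming from $(P_\varepsilon-P_0)$ and the strong-norm bounds on the outer resolvents already obtained in stage one, the total triple-bar bound takes the shape $C_1\pi_\varepsilon(\alpha/r)^{-N}+C_2 r^{-N}$ (after a little bookkeeping of how the weak norm propagates). Optimising in $N$ by equating the two terms produces $N\asymp\log(1/\pi_\varepsilon)/\log(1/\alpha)$ and a final bound of order $\pi_\varepsilon^{\rho}$ with precisely $\rho=\log(r/\alpha)/\log(1/\alpha)$; the split between the $c\|(z-P_0)^{-1}\|$ and $d\|(z-P_0)^{-1}\|^2$ coefficients should then come out by counting how many $(z-P_0)^{-1}$ factors flank the single $(P_\varepsilon-P_0)$ in the dominant term. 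The delicate point throughout is to never use strong-norm control of the perturbation, which is simply not available; every place where one is tempted to bound $\|P_\varepsilon-P_0\|$ must instead be replaced by a triple-bar bound together with one application of (\ref{eq:B2}).
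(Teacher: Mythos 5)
The paper itself does not prove this theorem --- it states it as a citation of the main result of Keller--Liverani (1999). So your attempt has to be measured against the argument in that reference rather than against anything in the present paper.

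Your overall mechanism is the right one and matches the source: truncated resolvent/Neumann identities, the Lasota--Yorke inequality (\ref{eq:B2}) to split strong-norm cost into an $\alpha^N$-contracted strong piece plus a weak piece, never estimating $P_\varepsilon-P_0$ in the strong operator norm but only in $\interleave\cdot\interleave$, and finally optimising the truncation length $N$ against $\pi_\varepsilon$; the computation giving $\rho=\log(r/\alpha)/\log(1/\alpha)$ from balancing $\pi_\varepsilon r^{-N}$ against $(\alpha/r)^N$ is correct. The plan for (\ref{eq:KL99-2}) is therefore in essence the actual proof.

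The gap is in your treatment of (\ref{eq:KL99-1}) and of a~priori invertibility, and it is not a small one. In the partial Neumann identity $(z-P_\varepsilon)^{-1}=\sum_{n<N}z^{-n-1}P_\varepsilon^n+z^{-N}P_\varepsilon^N(z-P_\varepsilon)^{-1}$, applying (\ref{eq:B2}) to the remainder produces two terms: $D(\alpha/|z|)^N\|(z-P_\varepsilon)^{-1}f\|$, which you correctly absorb, and $D|z|^{-N}\,|(z-P_\varepsilon)^{-1}f|_w$, which you wave off as ``a weak-norm remainder''. But $|z|^{-N}\geq r^{-N}$ grows with $N$, so this term cannot be absorbed by taking $N$ large, and you have no independent a~priori bound on $|(z-P_\varepsilon)^{-1}f|_w$; the argument as written is circular precisely at the point where the actual proof does its work. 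Likewise, the proposed Neumann-series inversion of $I-(P_\varepsilon-P_0)(z-P_0)^{-1}$ does not converge: each factor $(P_\varepsilon-P_0)$ sends strong norm to small weak norm, but $(z-P_0)^{-1}$ provides no weak-to-strong (nor even weak-to-weak, for $r<|z|\le 1$) gain, so the iterates $[(P_\varepsilon-P_0)(z-P_0)^{-1}]^n$ do not become small in the strong norm and ``feeding a high iterate through (\ref{eq:B2})'' has nothing to act on. The way out in the source is different: one first proves the a~priori lower bound on $\|(z-P_\varepsilon)f\|$ directly (without presuming invertibility), which yields injectivity and closed range; surjectivity then comes from assumption (\ref{eq:B3}) excluding residual spectrum outside $\{|z|\le\alpha\}$ (or, as the Remark after the theorem notes, from connectedness of the region). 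Your plan never invokes (\ref{eq:B3}), and without it you cannot pass from the a~priori estimate to existence of the resolvent. So: right mechanism and right exponent for (\ref{eq:KL99-2}), but (\ref{eq:KL99-1}) and the invertibility step are not correctly handled, and the logical role of (\ref{eq:B3}) is missing.
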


\begin{remark}
  In {\cite{KL1999}} it is assumed that $E = [0, \infty)$, but as the
  parameter $\varepsilon$ enters the proofs only via the numbers
  $\pi_{\varepsilon}$, the same proof applies to a more general set $E$ of
  parameters. Similarly, in \ {\cite{KL2009}} it is assumed that $E$ is a
  closed subset of $\mathbbm{R}$, but again the parameter $\varepsilon$ enters
  the estimates only via the derived quantities $\eta_{\varepsilon}$ and
  $\Delta_{\varepsilon}$ defined in section \ref{sec:KL2009} below, so that
  also this result is valid for more general sets of parameters.
\end{remark}

\begin{remark}
  If assumptions (\ref{eq:B1}) and (\ref{eq:B2}) are met and if the closed
  unit ball of $(V, \|.\|)$ is $| . |_w$ -compact, then $\sigma
  (P_{\varepsilon}) \setminus \{z \in \mathbbm{C}: |z| \leqslant \alpha\}$
  consists only of isolated eigenvalues of finite multiplicity so that
  assumption (\ref{eq:B3}) is automatically satisfied. This is guaranteed by a
  strengthened version of the Ionescu-Tulcea/Marinescu theorem (see e.g.
  {\cite{HH2001}}).
  
  There are numerous situations where this remark applies:
  \begin{itemizeminus}
    \item $V$ is a space of H\"older continuous functions on a compact metric
    space and $| . |_w$ is the supremum norm. If the underlying space is not
    compact, weighted norms may be used; reference {\cite{HH2001}} provides
    many examples. In any case this setting is adapted to the study of Markov
    operators acting on observables and to Ruelle transfer operators, see
    Ruelle's monograph {\cite[Proposition 5.24]{ruelle1978}} and
    {\cite{CS2009}} for a recent contribution. As this setting is less useful
    for studying open systems, no further details are provided here.
    
    \item $V$ is the space of functions of bounded variation on a regular
    domain of finite volume in $\mathbbm{R}^d$ and $| . |_w$ is the
    $L^1_{\tmop{Leb}}$-seminorm. This setting is used for transfer operators of
    piecewise expanding maps, starting with
    {\cite{keller79,keller80,HK1982,rychlik83}} and being continued in
    numerous papers, see {\cite{baladi2000}} for an overview until 2000,
    {\cite{DL2008}} for references until 2008 and {\cite{thomine2011}} for a
    very recent contribution to this direction. In practically all variants of
    this setting the passage to an open system can be described as a small
    perturbation of the Perron-Frobenius operator in the sense of the
    $\interleave . \interleave$-norm.
    
  \item More generally, $V$ is a space of distributions on a
    finite-dimensional smooth manifold, equipped with a suitable norm,
    and $| . |_w$ is another weaker seminorm on $V$. Starting with
    {\cite{BKL2002}}, such spaces have been applied so far to various
    hyperbolic or piecewise hyperbolic systems, see
    e.g. {\cite{GL2006,BT2007,DL2008,BG2009,BG2010}}.
    
  \item $V$ is a space of distributions on $[0, 1]^{\mathbbm{Z}^d}$
    which contains in particular the space of signed measures with
    absolutely continuous marginals of (uniformly) bounded variation
    (see {\cite{KL2006,BGK2007,KL2009a}} for details). This setting was used
    to study weakly coupled lattices of piecewise expanding interval
    maps. Inequality (\ref{eq:B2}) was stated explicitly in
    {\cite[Lemma 3.4]{BGK2007}}.
  \end{itemizeminus}
  \begin{remark}
    The conclusions of Theorem \ref{theo:KL99} remain valid, if assumption
    (\ref{eq:B3}) is replaced by the following: If $\delta > 0$ and $r -
    \alpha > 0$ are sufficiently small, then the set
    \[ \{z \in \mathbbm{C}: |z| > r, \tmop{dist} (z, \sigma (P_0) > \delta\}
       \text{ \ is connected.,}  \]
    see {\cite[Remark 6]{KL1999}}.
  \end{remark}
\end{remark}

Now we specialize to the case where the following operator mixing condition
holds:
\begin{equation}
  \text{$P_0 = \varphi_0 \otimes \nu_0 + Q_0$ as in (\ref{eq:A1}) where $Q_0$
  has spectral radius strictly smaller than $1$.} \label{eq:B5} 
\end{equation}
Observe that this implies the (\ref{eq:B3}) for some $\alpha\in(0,1)$. 
Therefore, assumptions (\ref{eq:B1}) - (\ref{eq:B4}) and (\ref{eq:B5}) imply
the validity of (\ref{eq:A1}) - (\ref{eq:A3}). This follows easily
from Theorem \ref{theo:KL99}, see {\cite[Corollaries 1 and
  2]{KL1999}}. It remains to show that also (\ref{eq:A4}) follows:
Observe first that by {\cite[Corollary 1]{KL1999}},
\[ | \varphi_{\varepsilon} - \varphi_0 \cdot \nu_0 (\varphi_{\varepsilon}) |_w
   = | \varphi_{\varepsilon} \otimes \nu_{\varepsilon} (\varphi_{\varepsilon})
   - \varphi_0 \otimes \nu_0 (\varphi_{\varepsilon}) |_w \leqslant K_1
   \pi_{\varepsilon}^{\rho} \cdot \| \varphi_{\varepsilon} \| \leqslant K_1
   K_2 \pi_{\varepsilon}^{\rho} \cdot | \varphi_{\varepsilon} |_w \]
for suitable $K_1, K_2 > 0$ and $\rho$ from Theorem \ref{theo:KL99}. Hence
\[ | \varphi_{\varepsilon} |_w \cdot \left( 1 - K_1 K_2
   \pi_{\varepsilon}^{\rho} \right) \leqslant | \varphi_0 |_w \cdot | \nu_0
   (\varphi_{\varepsilon}) | \]
so that $| \varphi_{\varepsilon} |_w \leqslant 2 \cdot | \varphi_0 |_w \cdot |
\nu_0 (\varphi_{\varepsilon}) |$ when $\varepsilon \in E$ is sufficiently
close to $0$. Invoking {\cite[Corollary 1]{KL1999}} once more, this yields
\[ \| \varphi_{\varepsilon} \| \leqslant 2 K_2 \cdot | \varphi_0 |_w \cdot |
   \nu_0 (\varphi_{\varepsilon}) | \text{ \ \ \ for $\varepsilon \in E$ sufficiently
   close to } 0. \]
Therefore one can normalize $\varphi_{\varepsilon}$ (and in consequence
$\nu_{\varepsilon}$ as well) such that $\nu_0 (\varphi_{\varepsilon}) = 1$,
and (\ref{eq:A4}) holds for these normalized $\varphi_{\varepsilon}$ with a
constant $C_2 = 2 K_2 | \varphi_0 |_w$.

\subsection{Verifying assumptions (\ref{eq:A5}) and (\ref{eq:A6})}

In the remainder of this paper we restrict attention to the following
dynamical setting:

\begin{descriptionlong}
  \item[REPFO (Rare event Perron-Frobenius operators)] $V$ is a space of
  functions or distributions on a metric space $M$ that contains the constant
  functions, and $P_0$ is the Perron-Frobenius operator of a dynamical system
  $T : M \rightarrow M$ w.r.t. the reference measure $\nu_0$. It is mixing in
  the sense of (\ref{eq:B5}). The family $(A_{\varepsilon}$: $\varepsilon \in
  E)$ is a family of subsets of $M$ which are sufficiently regular such that
  \begin{enumerateroman}
    \item the operators $P_{\varepsilon}$, $P_{\varepsilon} f = P_0 (1_{M
    \setminus A_{\varepsilon}} f)$, satisfy assumptions (\ref{eq:B3}) --
    (\ref{eq:B4}) and (\ref{eq:B5}),
    
    \item $1_{A_{\varepsilon}} \cdot f \in V$ for all $f \in V$, and
    
    \item $| \nu_0 (1_{A_{\varepsilon}} f) | \cdot \|1_{A_{\varepsilon}}
    \varphi_0 \| \leqslant \tmop{const} \|f\| \cdot | \nu_0
    (1_{A_{\varepsilon}} \varphi_0) |$ for all $f \in V$.
  \end{enumerateroman}
\end{descriptionlong}

\begin{remark}
  If $V$ is a space of functions such that $\sup |f| \leqslant \|f\|$ for all
  $f \in V$, if $\nu_0$ is a measure on $M$, and if $\inf_{x \in
  \bigcup_{\varepsilon} A_{\varepsilon}} \varphi_0 (x) > 0$, then REPFO.iii is
  satisfied.
\end{remark}

\begin{remark}
  As $\nu_0 P_0 = \nu_0$, condition REPFO.iii implies (in fact, is nearly
  equivalent to)
  \[ \| \nu_0 (P_0 - P_{\varepsilon})\| \cdot \|(P_0 - P_{\varepsilon})
     (\varphi_0)\| \leqslant | \nu_0 (P_0 - P_{\varepsilon}) (\varphi_0) | . \]
  Let $\mu_0$ be the probability measure on $M$ defined by $\mu_0 (A) = \int_A
  \varphi_0 d \nu_0$. (It is the only $T$-invariant probability measure
  absolutely continuous w.r.t. $\nu_0$.) Then
  \[ \mu_0 (A_{\varepsilon}) = \nu_0 (P_0 - P_{\varepsilon}) (\varphi_0)
     \leqslant \tmop{const} \| \nu_0 (P_0 - P_{\varepsilon})\|. \]
\end{remark}

\begin{proposition}
  In a REPFO setting, assumptions (\ref{eq:A1})--(\ref{eq:A6}) are satisfied.
\end{proposition}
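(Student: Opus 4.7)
My plan breaks into three stages. First I would dispose of (\ref{eq:A1})--(\ref{eq:A4}) by simply invoking the preceding subsection: REPFO.i supplies exactly the hypotheses (\ref{eq:B3})--(\ref{eq:B4}) together with (\ref{eq:B5}) required there, so the spectral data $\lambda_\varepsilon, \varphi_\varepsilon, \nu_\varepsilon, Q_\varepsilon$ extracted via Theorem \ref{theo:KL99} (and Corollaries~1 and~2 of \cite{KL1999}) yield (\ref{eq:A1})--(\ref{eq:A3}), while the normalization argument already carried out there delivers (\ref{eq:A4}) with the explicit constant $C_2 = 2 K_2 |\varphi_0|_w$. No new work is needed for this step.

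Next, the algebraic engine for both (\ref{eq:A5}) and (\ref{eq:A6}) is the identity $(P_0-P_\varepsilon) f = P_0(1_{A_\varepsilon} f)$, which, combined with $\nu_0 P_0 = \nu_0$, gives $\nu_0((P_0-P_\varepsilon)f) = \nu_0(1_{A_\varepsilon} f)$ and, specializing to $f = \varphi_0$, the identification $\Delta_\varepsilon = \mu_0(A_\varepsilon)$. Assumption (\ref{eq:A6}) then falls out of REPFO.iii by a pleasant cancellation: taking the supremum over $\|f\| \leqslant 1$ in that inequality gives $\|\nu_0(P_0-P_\varepsilon)\| \leqslant \tmop{const}\cdot \Delta_\varepsilon / \|1_{A_\varepsilon}\varphi_0\|$, while the operator norm bound $\|(P_0-P_\varepsilon)\varphi_0\| = \|P_0(1_{A_\varepsilon}\varphi_0)\| \leqslant \|P_0\|\cdot \|1_{A_\varepsilon}\varphi_0\|$ contributes the compensating factor; multiplying the two estimates yields (\ref{eq:A6}).

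The main obstacle is (\ref{eq:A5}), because REPFO.iii on its own only produces the ratio $\Delta_\varepsilon / \|1_{A_\varepsilon}\varphi_0\|$, whose limit behavior is not a priori transparent. I would sidestep this by first observing that $\nu_0$ is automatically bounded with respect to the weak seminorm $|\cdot|_w$: since $\nu_0 = \nu_0 P_0^n$ for every $n$, combining this invariance with the Lasota-Yorke bound (\ref{eq:B2}) gives
\[ |\nu_0(g)| \leqslant \|\nu_0\|\,\|P_0^n g\| \leqslant D \|\nu_0\|\bigl(\alpha^n \|g\| + |g|_w\bigr), \]
and letting $n \to \infty$ yields $|\nu_0(g)| \leqslant D \|\nu_0\|\,|g|_w$. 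Substituting $g = (P_0-P_\varepsilon)f$ and invoking (\ref{eq:B4}), which bounds $|(P_0-P_\varepsilon)f|_w$ by $\pi_\varepsilon \|f\|$ with $\pi_\varepsilon \to 0$, then produces $\|\nu_0(P_0-P_\varepsilon)\| = O(\pi_\varepsilon)$, completing the verification of (\ref{eq:A5}) and with it the proposition.
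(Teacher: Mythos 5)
Your proof is correct and follows essentially the same route as the paper. Both arguments reduce (\ref{eq:A5}) to the fact that $\nu_0$ is bounded with respect to the weak seminorm $|\cdot|_w$, obtained by combining the Lasota-Yorke bound (\ref{eq:B2}) with the vanishing of the transient part of $P_0^n$ as $n\to\infty$; you phrase this through the invariance $\nu_0 = \nu_0 P_0^n$, while the paper uses the convergence $P_0^n \to \varphi_0\otimes\nu_0$ (it writes $\|\varphi_0\|\,|\nu_0(1_{A_\varepsilon}f)| = \lim_n \|P_0^n((P_0-P_\varepsilon)f)\| \leqslant D_3\,|(P_0-P_\varepsilon)f|_w$), but these are the same estimate in two guises. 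Your treatment of (\ref{eq:A6}) via REPFO.iii together with $\|(P_0-P_\varepsilon)\varphi_0\| = \|P_0(1_{A_\varepsilon}\varphi_0)\| \leqslant \mathrm{const}\,\|1_{A_\varepsilon}\varphi_0\|$ is likewise the paper's argument. One small cosmetic point: rather than dividing by $\|1_{A_\varepsilon}\varphi_0\|$, which could in principle vanish, it is cleaner to multiply the two raw inequalities $\|\nu_0(P_0-P_\varepsilon)\|\cdot\|1_{A_\varepsilon}\varphi_0\| \leqslant \mathrm{const}\,\Delta_\varepsilon$ and $\|(P_0-P_\varepsilon)\varphi_0\|\leqslant\|P_0\|\cdot\|1_{A_\varepsilon}\varphi_0\|$ before cancelling, which is exactly what the paper does; this avoids a spurious case distinction.
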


\begin{proof}
  In the preceding section it was argued that assumptions (\ref{eq:A1}) --
  (\ref{eq:A4}) are satisfied. Next observe that for $f \in V$ with $\|f\|
  \leqslant 1$,
  \begin{eqnarray*}
    \| \varphi_0 \| \cdot | \nu_0 (1_{A_{\varepsilon}} f) | & = & \|(\varphi_0
    \otimes \nu_0) (1_{A_{\varepsilon}} f)\|\\
    & = & \lim_{n \rightarrow \infty} \|P_0^n ((P_0 - P_{\varepsilon}) (f))\|
    \text{ \ \ \ \ \ \ \ \ \ \ \ \ \ \ \ by (\ref{eq:A3})}\\
    & \leqslant & D_3 | (P_0 - P_{\varepsilon}) (f) |_w \text{ \ \ \ \ \ \ \
    \ \ \ \ \ \ \ \ \ \ \ \ \ \ \ \ by ($\ref{eq:B2})$}\\
    & \leqslant & \pi_{\varepsilon} D_3 \|f\| \leqslant \pi_{\varepsilon}
    D_3 . \text{ \ \ \ \ \ \ \ \ \ \ \ \ \ \ \ \ \ \ \ \ \ \ \ \ by
    (\ref{eq:B4})}
  \end{eqnarray*}
  Therefore,
  \[ \| \nu_0 (P_0 - P_{\varepsilon})\|= \sup_{\|f\| \leqslant 1} | \nu_0
     (1_{A_{\varepsilon}} f) | \leqslant \pi_{\varepsilon}  \frac{D_3}{\|
     \varphi_0 \|} \]
  and
  \begin{eqnarray*}
    \| \nu_0 (P_0 - P_{\varepsilon})\| \cdot \|(P_0 - P_{\varepsilon})
    \varphi_0 \| & \leqslant & \tmop{const} \sup_{\|f\| \leqslant 1} | \nu_0
    (1_{A_{\varepsilon}} f) | \cdot \|1_{A_{\varepsilon}} \varphi_0 \| \text{
    \ \ \ \ \ \ \ \ \ by (\ref{eq:B2})}\\
    & \leqslant & \tmop{const} | \nu_0 (1_{A_{\varepsilon}} \varphi_0) |
    \text{ \ \ \ \ \ \ \ \ \ \ \ \ \ \ \ \ \ \ \ \ \ \ \ \ \ \ \ \ by iii.}\\
    & = & |\nu_0 (P_0 - P_\varepsilon)(\varphi_0)|.
  \end{eqnarray*}
  This proves (\ref{eq:A5}) and (\ref{eq:A6}).
\end{proof}
\quad\\
In section \ref{sec:review}, we discuss examples of REPFO settings.

\section{Rare events, exponential hitting time distributions, and extreme
value statistics}\label{sec:main}

Recall that we restrict to the REPFO setting in the remainder of this paper.

Our approach to estimate first hitting time distributions of rare events
proceeds along the following well known lines: denote the first hitting time
to $A_{\varepsilon}$ by $\tau_{\varepsilon}$, i.e. let $\tau_{\varepsilon} (x)
= \inf \{i \geqslant 0 : T^i x \in A_{\varepsilon} \}$ for $x \in M$. We are
interested in estimating $\nu_0 \{\tau_{\varepsilon} \geqslant n\}$ and $\mu_0
\{\tau_{\varepsilon} \geqslant n\}$.

If $\varphi \in V \tmop{is} \tmop{such} \tmop{that} \varphi \nu_0$ is a
probability measure on $M$, then
\begin{eqnarray}
  &  &  \nonumber\\
  \int_{\{\tau_{\varepsilon} \geqslant n\}} \varphi d \nu_0 & = & \int_M
  \prod_{i = 0}^{n - 1} 1_{M \setminus A_{\varepsilon}} \circ T^i \cdot
  \varphi d \nu_0 \nonumber\\
  & = & \int_M P_0^n \left( \prod_{i = 0}^{n - 1} 1_{M \setminus
  A_{\varepsilon}} \circ T^i \cdot \varphi \right) d \nu_0 \nonumber\\
  & = & \int_M P_{\varepsilon}^n \varphi\, d \nu_0 \nonumber\\
  & = & \lambda_{\varepsilon}^n  \int_M \varphi_{\varepsilon} \otimes
  \nu_{\varepsilon} (\varphi)\, d \nu_0 + \lambda_{\varepsilon}^n  \int_M
  Q_{\varepsilon}^n \varphi\, d \nu_0 \nonumber\\
  & = & \lambda_{\varepsilon}^n \nu_{\varepsilon} (\varphi)
  +\mathcal{O}(\lambda_{\varepsilon}^n \|Q_{\varepsilon}^n \| \cdot \| \varphi
  \|)  \label{eq:pineps}
\end{eqnarray}
Here the constant implied by ,,$\mathcal{O}$`` is uniform in $n$ and
$\varepsilon$. Applied with $\varphi = 1$ or $\varphi = \varphi_0$ one obtains
expressions for \ $\nu_0 \{\tau_{\varepsilon} \geqslant n\}$ and $\mu_0
\{\tau_{\varepsilon} \geqslant n\}$, respectively. To make use of this
identity, we need some control of $\lambda_{\varepsilon}$ as a function of
$\varepsilon$.

\subsection{The perturbation formula for the leading
eigenvalue}\label{sec:KL2009}

The main result of {\cite{KL2009}} asserts that $\lambda_{\varepsilon} = \exp
\left( - \mu_0 (A_{\varepsilon}) \theta (1 + o(1)) \right)$ with a
constant $\theta \in (0, 1]$ determined by the dynamics and a $o(1)$-term for $\varepsilon \rightarrow 0$.

For the further discussion we recall a more precise intermediate identity
from {\cite{KL2009}}. Let
\begin{eqnarray}
  \eta_{\varepsilon} & \assign & \| \nu_0 (P_0 - P_{\varepsilon})\|= \sup
  \left\{ \int_{A_{\varepsilon}} f\, d \nu_0 : f \in V, \|f\| \leqslant 1
  \right\}, \nonumber\\
  \Delta_{\varepsilon} & \assign & \nu_0 (P_0 - P_{\varepsilon}) (\varphi_0) =
  \int_{A_{\varepsilon}} \varphi_0\, d \nu_0 = \mu_0 (A_{\varepsilon}),
  \nonumber\\
  \kappa_N & \assign & \sum_{k = N}^{\infty} \sup_{\varepsilon \in E}
  \|Q_{\varepsilon}^k \|= \mathcal{O} ((1 - \gamma)^N),  \label{eq:kappaN}
\end{eqnarray}
where $\gamma > 0$ is some lower bound for the spectral gaps of the
$P_{\varepsilon}$. (It is strictly positive because of Theorem
\ref{theo:KL99}, see also {\cite[Corollary 2]{KL1999}}.) Then we have, for
each $\varepsilon \in E$ with $\Delta_{\varepsilon} \neq 0$ and for each $N \in
\mathbbm{N}$,
\begin{equation}
  \frac{1 - \lambda_{\varepsilon}}{\Delta_{\varepsilon}} = \left( \theta_{N,
  \varepsilon} + \mathcal{O} ((1-\gamma)^N) \right)  \left( 1 + \mathcal{O} (N
  \eta_{\varepsilon}) \right) \label{eq:KL09}
\end{equation}
with constants independent of $N$ and $\varepsilon$ \ {\cite[eq.
(6.2)]{KL2009}}{\footnote{If $\Delta_{\varepsilon} = 0$, the numbers $q_{k,
\varepsilon}$ are not well defined. Therefore the argument, given in
{\cite[after eq.(6.2)]{KL2009}}, that $\lambda_{\varepsilon} = \lambda_0$ in
this case, is wrong. The claim is nevertheless true, because assumption
(\ref{eq:A6}) implies that $\nu_0 (P_0 - P_{\varepsilon}) = 0$ or $(P_0 -
P_{\varepsilon}) (\varphi_0) = 0$. In both cases, $(\lambda_0 /
\lambda_{\varepsilon})^n = (\lambda_0 / \lambda_{\varepsilon})^n \nu_0
(\varphi_0) = \nu_0 (\lambda_{\varepsilon}^{- 1} P_{\varepsilon})^n
(\varphi_0) = \nu_0 (\varphi_{\varepsilon}) \nu_{\varepsilon} (\varphi_0) +
o(1) \neq 0$ when $n \rightarrow \infty$ in view of assumptions
(\ref{eq:A1}) -- (\ref{eq:A3}), so that $\lambda_{\varepsilon} = \lambda_0$ in
view of (\ref{eq:A4}).}}, where
\begin{equation}
  \theta_{N, \varepsilon} = 1 - \sum_{k = 0}^{N - 1} \lambda_{\varepsilon}^{-
  k} q_{k, \varepsilon} \text{ \ \ with \ \ } q_{k, \varepsilon} = \frac{\nu_0
  ((P_0 - P_{\varepsilon}) P_{\varepsilon}^k (P_0 - P_{\varepsilon})
  (\varphi_0))}{\Delta_{\varepsilon}} . \label{eq:xiNeps}
\end{equation}
For REPFO-type $P_{\varepsilon}$ we are looking at here this simplifies,
in case $\mu_0 (A_{\varepsilon}) > 0$, to
\begin{equation}
  q_{k, \varepsilon} = \frac{1}{\mu_0 (A_{\varepsilon})} 
  \int_{A_{\varepsilon}} P_{\varepsilon}^k P_0 (1_{A_{\varepsilon}} \varphi_0)
  d \nu_0 = \frac{\mu_0 (A_{\varepsilon} \cap T^{- 1} A_{\varepsilon}^c
  \cap T^{- 2} A_{\varepsilon}^c \cap \ldots \cap T^{- k} A_{\varepsilon}^c \cap T^{- (k+1)}
  A_{\varepsilon})}{\mu_0 (A_{\varepsilon})}.
\end{equation}

Observe that $\sum_{k = 0}^{\infty} q_{k, \varepsilon} = 1$ for each
$\varepsilon \in E$ by Kac's recurrence theorem. On the other hand, assuming
that
\begin{equation}
  q_k \assign \lim_{\varepsilon \rightarrow 0} q_{k, \varepsilon} \text{ \
  exists for all } k \text{ and denoting } \theta \assign 1 - \sum_{k =
  0}^{\infty} q_k \label{eq:xi},
\end{equation}
this latter sum can attain any value between $0$ and $1$. In any case, if these limits exist, and if we denote
$\theta_N = 1 - \sum_{k = 0}^{N - 1} q_k$, we get
\[ \lim_{\varepsilon \rightarrow 0} \frac{1 - \lambda_{\varepsilon}}{\mu_0
   (A_{\varepsilon})} = \theta_N + \mathcal{O} ((1 - \gamma)^N) . \]
As this is true for every $N$, we conclude, as in {\cite{KL2009}}, that
\begin{equation}
  \lim_{\varepsilon \rightarrow 0} \frac{1 - \lambda_{\varepsilon}}{\mu_0
  (A_{\varepsilon})} = \theta \assign 1 - \sum_{k = 0}^{\infty} q_k,
  \label{eq:KL2009b}
\end{equation}
i.e.
\begin{equation}
  \lambda_{\varepsilon} = 1 - \theta \mu_0 (A_{\varepsilon}) + o(\mu_0 (A_{\varepsilon})) = \exp (- \theta \mu_0 (A_{\varepsilon}) +
  o(\mu_0 (A_{\varepsilon}))) . \label{eq:lambdaeps}
\end{equation}
In the next subsections we will draw some finer conclusions on the
(non)-hitting probabilities $\nu_0 \{\tau_{\varepsilon} \geqslant n\}$ and
$\mu_0 \{\tau_{\varepsilon} \geqslant n\}$ working directly with equation
(\ref{eq:KL09}).

\begin{remark}
  For the map $T x = 2 x \tmop{mod} 1$ and $\nu_0$ the Lebesgue measure on
  $[0, 1]$, formula (\ref{eq:KL2009b}) was derived by combinatorial methods in
  {\cite[Theorem 4.6.1]{BY2011}}. In the context of Gibbs measures with a
  H\"older continuous potential this formula was derived also in
  {\cite{FP2010}}, using spectral theoretic methods and ideas on exponential
  first return times from {\cite{hirata1993}}. 
\end{remark}

\begin{remark}
  \label{remark:periodic1}If the map $T$ is continuous, if the holes
  $A_{\varepsilon}$ shrink to a single point $x \in M$ and if $\mu_0
  (A_{\varepsilon}) > 0$ for all $\varepsilon \neq 0$, then
  \begin{enumerateroman}
    \item $q_k = \lim_{\varepsilon \rightarrow 0} \frac{\mu_0 (A_{\varepsilon}
    \cap T^{- (k + 1)} A_{\varepsilon})}{\mu_0 (A_{\varepsilon})}$ if $x$ is a
    periodic point of minimal period $k + 1$,
    
    \item $q_k = 0$ otherwise.
  \end{enumerateroman}
  Therefore, $\theta < 1$ is possible only if $x$ is a periodic point. If $x$
  is a hyperbolic periodic point, then \ $\theta < 1$, indeed.
\end{remark}

  \begin{remark}\label{remark:rychlik1}
    Let $T:[0,1]\to[0,1]$ be a \emph{piecewise monotone interval map of
    Rychlik type} (which always fits the REPFO setting, see section
    \ref{subsec:rychlik}).  Denote by $\nu_0$ the Lebesgue measure on
    $[0,1]$. Assume furthermore that $V$ is a finite subset of
    $(0,1)$, that $A_\varepsilon$ is the $\varepsilon$-neighbourhood
    of $V$, and that the invariant density $\varphi_0$ is continuous
    at each $x \in V$.
  
  Denote by $\Pi$ the set of all pairs $(x, k) \in V \times
  \mathbbm{N}$ such that $T^{k + 1} x \in V$ and $T^j x \nin
  V$ for $j = 1, \ldots, k$. It is easy to prove that
  \[ \theta = 1 - \sum_{(x, k) \in \Pi} \frac1{|(T^k)'(x)|}. \]
  It follows that $\theta < 1$ if and only if the set $\Pi$ is nonempty. See also the related examples in \cite{KL2009}.
  \end{remark}

\begin{remark}
  If the holes shrink to a lower dimensional submanifold which is invariant
  under the dynamics, then typically $q_0 > 0$ (and hence $\theta < 1$), see
  {\cite[section 4]{KL2009}} for a particular example.
\end{remark}

\subsection{A sharp error term for exponential hitting time
distributions}\label{subsec:sharp}

Abadi {\cite{abadi2004}} studied the convergence of hitting time distributions
for hitting cylinder sets in a shift space over a finite alphabet $A$ under
$\phi$-mixing stationary laws. He proved: Let $\mu_0$ be a $\phi$-mixing
stationary measure on $A^{\mathbbm{N}}$ with $\sum_n \phi (n) < \infty$. Then
there are constants $C > 0$ and $0 < \Xi_1 < 1 < \Xi_2 < \infty$ such that for
all $L \in \mathbbm{N}$, cylinder sets $A = [a_0 \ldots a_{L - 1}]$ and $t
\geqslant 1$, there exists $\xi_A \in [\Xi_1, \Xi_2]$ with the following
property: for the first hitting time $\tau_A$ of $A$ and all $t > 0$,
\begin{equation}
  \left| \mu_0 \left\{ \tau_A > \frac{t}{\xi_A \mu_0 (A)} \right\} - e^{- t} \right|
  \leqslant C \delta (A) (t \vee 1) e^{- t} \label{eq:abadi1}
\end{equation}
where $\delta (A) = \min_{L \leqslant N \leqslant 1 / \mu_0 (A)} (N \mu_0 (A)
+ \phi (N))$.

An analogous result follows in the dynamical REPFO setting. One should
observe, however, that Abadi assumes only a summable mixing rate whereas the
REPFO setting always implies exponential decay of correlations for ``regular''
observables (i.e. for observables in $V$).

\begin{proposition}
  Suppose that in a REPFO setting $\mu_0 (A_{\varepsilon}) > 0$ for all
  $\varepsilon \in E$ and that the number $\theta$ defined in
  (\ref{eq:KL2009b}) is strictly positive. Then there is a constant $C > 0$
  such that for all $\varepsilon \in E$ sufficiently close to $0$ there exists
  $\xi_{\varepsilon} > 0$ with the following property: for the first hitting
  time $\tau_{\varepsilon}$ of $A_{\varepsilon}$ and all $t > 0$:
  \begin{equation}
    \left| \mu_0 \left\{ \tau_{\varepsilon} \geqslant \frac{t}{\xi_{\varepsilon}
    \mu_0 (A_{\varepsilon})} \right\} - e^{- t} \right| \leqslant C
    \delta_{\varepsilon} (t \vee 1) e^{- t} \label{eq:abadi}
  \end{equation}
  where $\delta_{\varepsilon} = \min_{1 \leqslant N} (N \eta_{\varepsilon} +
  \kappa_N) =\mathcal{O}(\eta_{\varepsilon} \log \eta_{\varepsilon})$. The
  numbers $\xi_{\varepsilon}$ satisfy $\lim_{\varepsilon \rightarrow 0}
  \xi_{\varepsilon} = \theta$.
\end{proposition}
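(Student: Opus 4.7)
The plan is to exploit the exact spectral expansion from (\ref{eq:A1})--(\ref{eq:A2}) to match $\mu_0\{\tau_\varepsilon \geq n\}$ against $e^{-t}$ with a carefully chosen $\xi_\varepsilon$. Concretely, I would set $\xi_\varepsilon := -\log\lambda_\varepsilon/\mu_0(A_\varepsilon)$; formula (\ref{eq:lambdaeps}) then yields $\xi_\varepsilon \to \theta > 0$, so $\xi_\varepsilon$ is a well-defined positive number for $\varepsilon$ small. For $t > 0$ and $n_t := \lceil t/(\xi_\varepsilon\mu_0(A_\varepsilon))\rceil$, the integer-valuedness of $\tau_\varepsilon$ gives $\{\tau_\varepsilon \geq t/(\xi_\varepsilon\mu_0(A_\varepsilon))\} = \{\tau_\varepsilon \geq n_t\}$, and $n_t - t/(\xi_\varepsilon\mu_0(A_\varepsilon)) \in [0,1)$ combined with $\lambda_\varepsilon^{t/(\xi_\varepsilon\mu_0(A_\varepsilon))} = e^{-t}$ gives the rounding estimate $|\lambda_\varepsilon^{n_t} - e^{-t}| \leq e^{-t}(1 - \lambda_\varepsilon) = O(e^{-t}\eta_\varepsilon)$.

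Next, from (\ref{eq:A1})--(\ref{eq:A2}) we have $\lambda_\varepsilon^{-n} P_\varepsilon^n = \varphi_\varepsilon\otimes\nu_\varepsilon + Q_\varepsilon^n$, and integrating against $\nu_0$ (using $\nu_0(\varphi_\varepsilon) = 1$ from (\ref{eq:A4})) yields the exact formula
\[
\mu_0\{\tau_\varepsilon \geq n\} = \int P_\varepsilon^n\varphi_0\,d\nu_0 = \lambda_\varepsilon^n\bigl(\nu_\varepsilon(\varphi_0) + \nu_0(Q_\varepsilon^n\varphi_0)\bigr).
\]
Substituting $n = n_t$ and splitting the resulting difference as
\[
\mu_0\{\tau_\varepsilon \geq n_t\} - e^{-t} = (\lambda_\varepsilon^{n_t} - e^{-t})\nu_\varepsilon(\varphi_0) + e^{-t}\bigl(\nu_\varepsilon(\varphi_0) - 1\bigr) + \lambda_\varepsilon^{n_t}\nu_0(Q_\varepsilon^{n_t}\varphi_0)
\]
reduces the problem to (i) the rounding estimate already obtained, (ii) bounding $|\nu_\varepsilon(\varphi_0) - 1|$ by $O(\delta_\varepsilon)$, and (iii) bounding the remainder $\lambda_\varepsilon^{n_t}\nu_0(Q_\varepsilon^{n_t}\varphi_0)$ by $O(\delta_\varepsilon(t\vee 1)e^{-t})$.

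For (ii) I would re-apply the same spectral identity at an arbitrary $N \in \mathbbm{N}$ to get $\nu_\varepsilon(\varphi_0) = \lambda_\varepsilon^{-N}\mu_0\{\tau_\varepsilon \geq N\} - \nu_0(Q_\varepsilon^N\varphi_0)$, then use the union bound $\mu_0\{\tau_\varepsilon \geq N\} = 1 + O(N\eta_\varepsilon)$, the elementary $\lambda_\varepsilon^{-N} = 1 + O(N\eta_\varepsilon)$ (valid for $N\eta_\varepsilon$ small, since $1 - \lambda_\varepsilon = O(\eta_\varepsilon)$ by (\ref{eq:lambdaeps})), and $|\nu_0(Q_\varepsilon^N\varphi_0)| \leq \|\nu_0\|\,\|\varphi_0\|\,\kappa_N$ to obtain $|\nu_\varepsilon(\varphi_0) - 1| = O(N\eta_\varepsilon + \kappa_N)$; minimizing over $N$ produces the sharp $O(\delta_\varepsilon)$. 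For (iii) I would split into $n_t \geq N^\ast$ (where $N^\ast$ achieves the minimum defining $\delta_\varepsilon$), in which case $\|Q_\varepsilon^{n_t}\| \leq \kappa_{N^\ast} = O(\delta_\varepsilon)$, and $n_t < N^\ast$, in which case $t \leq N^\ast\xi_\varepsilon\mu_0(A_\varepsilon) = O(\delta_\varepsilon)$, so that $\mu_0\{\tau_\varepsilon \geq n_t\}$ and $e^{-t}$ are both $1 + O(\delta_\varepsilon)$ and their difference is trivially absorbed into $\delta_\varepsilon(t\vee 1)e^{-t}$. The convergence $\xi_\varepsilon \to \theta$ is then immediate from (\ref{eq:lambdaeps}).

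I expect the main obstacle to be step (ii): obtaining the sharp rate $O(\delta_\varepsilon)$ for $|\nu_\varepsilon(\varphi_0) - 1|$. A direct appeal to the perturbation bound on spectral projectors from Theorem \ref{theo:KL99} would give only the coarser rate $\pi_\varepsilon^\rho$ in the weak norm, which need not be comparable to $\delta_\varepsilon = \mathcal{O}(\eta_\varepsilon \log \eta_\varepsilon)$. The refinement relies on combining a purely dynamical union-bound estimate on $\mu_0\{\tau_\varepsilon \geq N\}$ with the spectral exponential decay encoded in $\kappa_N$, balanced through the free parameter $N$; this is precisely the mechanism built into the definition of $\delta_\varepsilon$.
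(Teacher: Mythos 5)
Your proposal is correct and follows the same strategy as the paper: expand $\mu_0\{\tau_\varepsilon\geq n\}$ via the spectral decomposition behind (\ref{eq:pineps}), calibrate $\xi_\varepsilon$ so that $\lambda_\varepsilon^n$ tracks $e^{-t}$, and split cases according to whether $n$ exceeds a threshold of order $|\log\eta_\varepsilon|$. The only deviations are minor and arguably cleaner: you take $\xi_\varepsilon:=-\log\lambda_\varepsilon/\mu_0(A_\varepsilon)$ so that $\lambda_\varepsilon^{t/(\xi_\varepsilon\mu_0(A_\varepsilon))}=e^{-t}$ holds identically and the rounding error reduces to $e^{-t}(1-\lambda_\varepsilon)$, whereas the paper sets $\xi_\varepsilon=\theta_{N^\ast,\varepsilon}+\mathcal{O}(\kappa_{N^\ast})$ and carries an extra multiplicative $1+\mathcal{O}(\delta_\varepsilon)$ in the exponent; and you re-derive $|\nu_\varepsilon(\varphi_0)-1|=\mathcal{O}(\delta_\varepsilon)$ from the spectral identity plus a union bound, where the paper simply cites the sharper $\mathcal{O}(\eta_\varepsilon)$ estimate of Lemma 6.1 in \cite{KL2009} --- your bound is self-contained, and since $\delta_\varepsilon\geq\eta_\varepsilon$ both are adequate for the stated error term.
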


\begin{proof}
  Given $\varepsilon > 0$, fix $N \in \mathbbm{N}$ such that $N
  \eta_{\varepsilon} +\mathcal{O}(\kappa_N)$ becomes minimal, and denote this
  quantity by $\delta_{\varepsilon}$. Here $\mathcal{O}(\kappa_N)=\mathcal{O}((1-\gamma)^N)$ is the term
  from equation (\ref{eq:KL09}). Then $N =\mathcal{O}(\log
  \eta_{\varepsilon})$ and $\delta_{\varepsilon}
  =\mathcal{O}(\eta_{\varepsilon} \log \eta_{\varepsilon})$. Let
  $\xi_{\varepsilon} = \theta_{N, \varepsilon} +\mathcal{O}(\kappa_N)$. Then
  \begin{equation}
    \lambda_{\varepsilon} = 1 - \mu_0 (A_{\varepsilon}) \xi_{\varepsilon} 
    \left( 1 + \mathcal{O} (N \eta_{\varepsilon}) \right) = \exp [- \mu_0
    (A_{\varepsilon}) \xi_{\varepsilon}  \left( 1 + \mathcal{O}
    (\delta_{\varepsilon}) \right)] \label{eq:lambda1}
  \end{equation}
  as $\mu_0 (A_{\varepsilon}) \leqslant \| \varphi_0 \| \cdot
  \eta_{\varepsilon}$.
  
  On the other hand, $\lambda_{\varepsilon} = 1 - \mu_0 (A_{\varepsilon})
  (\theta + o(1))$ when $\varepsilon \rightarrow 0$ by
  (\ref{eq:KL2009b}). Therefore
  \begin{displaymath}
    (\theta - \xi_{\varepsilon}) (1 +\mathcal{O}(\delta_{\varepsilon}))  = 
    \theta\, \mathcal{O}(\delta_{\varepsilon}) + o(1) \rightarrow 0
  \end{displaymath}
  so that $\lim_{\varepsilon \rightarrow 0} \xi_{\varepsilon} = \theta > 0$.
  In particular $\xi_{\varepsilon} > 0$ if $\varepsilon$ is sufficiently close
  to $0$.
  
  Now let $n = \lfloor t / (\xi_{\varepsilon} \mu_0 (A_{\varepsilon}))
  \rfloor$ and assume for the moment that $n > | \frac{\log \eta_{\varepsilon}}{\log(1-\gamma)} |$.
  Then, in view of (\ref{eq:pineps}) and (\ref{eq:kappaN}), there is some $q
  \in [0, 1]$ such that
  \begin{eqnarray}
    \mu_0 \left\{ \tau_{\varepsilon} \geqslant \frac{t}{\xi_{\varepsilon}
    \mu_0 (A_{\varepsilon})} \right\} & = & \lambda_{\varepsilon}^{n + q} 
    \left[ \nu_{\varepsilon} (\varphi_0) +\mathcal{O}(\|Q_{\varepsilon}^n \|)
    \right]\text{ from
  (\ref{eq:KL09}).}  \\
    & = & \exp [- t (1 + \mathcal{O} (\delta_{\varepsilon}))] \cdot (1
    +\mathcal{O}(\eta_{\varepsilon})) . \nonumber
  \end{eqnarray}
  (Here we also used $\nu_{\varepsilon} (\varphi_0) = 1
  +\mathcal{O}(\eta_{\varepsilon})$, see {\cite[Lemma 6.1]{KL2009}}.)
  Therefore,
  \begin{equation}
    \left| \mu_0 \left\{ \tau_{\varepsilon} \geqslant \frac{t}{\xi_{\varepsilon}
    \mu_0 (A_{\varepsilon})} \right\} - e^{- t} \right| \leqslant C (t \vee 1) e^{-
    t} \delta_{\varepsilon}  \label{eq:wie abadi} .
  \end{equation}
  (For $t\,\delta_\varepsilon\leqslant c$, where $c>0$ a suitable small constant, one can  estimate by $C\,e^{-t}\,\delta_\varepsilon$, in the other case by $c^{-1}\,C\,t\,e^{-t}\,\delta_\varepsilon$.)

  For $n \leqslant | \frac{\log \eta_{\varepsilon}}{\log(1-\gamma)} |$ we have the following trivial
  estimates:
  \[ 1 \geqslant \mu_0 \left\{ \tau_{\varepsilon} \geqslant
     \frac{t}{\xi_{\varepsilon} \mu_0 (A_{\varepsilon})} \right\} \geqslant
     \mu_0 \{\tau_{\varepsilon} \geqslant n + 1\} \geqslant 1 - \sum_{i = 0}^n
     \mu_0 (T^{- i} A_{\varepsilon}) = 1 - (n + 1) \mu_0 (A_{\varepsilon})
     \geqslant 1 -\mathcal{O}(\delta_{\varepsilon}) \]
  and
  \[ 1 \geqslant e^{- t} \geqslant 1 - t \geqslant 1 - (n + 1)
     \xi_{\varepsilon} \mu_0 (A_{\varepsilon}) \geqslant 1
     -\mathcal{O}(\delta_{\varepsilon}) . \]
  Hence the l.h.s. in (\ref{eq:wie abadi}) is bounded by
  $\mathcal{O}(\delta_{\varepsilon})$, and as $e^{- t} \geqslant 1
  -\mathcal{O}(\delta_{\varepsilon})$, this proves (\ref{eq:wie abadi}) also
  for $n \leqslant | \frac{\log \eta_{\varepsilon}}{\log(1-\gamma)} |$.
\end{proof}

\begin{remark}
  The problem of convergence of the distributions of rescaled hitting times to
  an exponential without an error term as in (\ref{eq:abadi}) was addressed in
  many papers and under various assumptions, mostly for holes
  $A_{\varepsilon}$ that are not necessarily cylinder sets. Galves and Schmitt
  {\cite{GS1990}} started these investigations under an exponential
  $\psi$-mixing assumption. Hirata obtained a slightly weaker form of such
  results for Gibbs states on subshifts of finite type and for Axiom A
  diffeomorphisms {\cite{hirata1993}} and for system with a finite generating
  $\varphi$- mixing partition {\cite{hirata1994}} as an important step in
  proving Poisson laws for these systems. For $\alpha$-mixing prcesses over a
  finite or countable alphabet, Abadi and Saussol {\cite{AS2010}} proved
  convergence of the distributions of the rescaled hitting times to an
  exponential without any further assumption on the mixing rate. Abadi
  {\cite{abadi2006}} discussed in some detail the nature of the scaling
  factors $\xi_A$.
\end{remark}

\subsection{Extreme value statistics and the extremal
index}\label{subsec:extreme}

In a series of papers, Freitas, Freitas and Todd
{\cite{FFT2009,FFT2010b,FFT2010a}} studied extreme value statistics for
observations generated by chaotic dynamical systems and related them to first
hitting time statistics. The starting point of their approach are mixing
conditions in the tradition of Leadbetter
{\cite{leadbetter1973,leadbetter1983}} that are adapted to the problem of
studying extremal events. They show that these conditions are in particular
satisfied if the underlying system is a (topologically mixing) one-dimensional
map of Rychlik type {\cite{rychlik83}} and $\nu_0$ and $\mu_0$ are the
associated conformal resp. invariant measure. Here we show how some of their
results can be derived (and slightly extended) from our formula
(\ref{eq:KL2009b}).

We begin recalling the approach to extreme value laws taken in
{\cite{FFT2009,FFT2010b,FFT2010a}}. Suppose that $X : M \rightarrow
\mathbbm{R}$ is a continuous function. Let $z_{\max} \assign \sup \{z \in
\mathbbm{R}: \nu_0 (X \geqslant z) > 0\}$ and assume that for some $z_0 <
z_{\max}$ all sets $V_z \assign \{X > z\}$ for $z \in [z_0, z_{\max}]$ satisfy
the assumptions on the sets $A_{\varepsilon}$ from above. Suppose for
simplicity that $\nu_0 (V_{z_{\max}}) = 0$. Fix $t > 0$ and determine real
numbers $z_n \in [z_0, z_{\max}]$ such that $\lim_{n \rightarrow \infty} n
\mu_0 \{X > z_n \}= t$. Consider the process $X_n = X \circ T^n$ on the
probability space $(M, \mathcal{B}, \nu_0)$. Then $\max (X_0 (\omega), \ldots,
X_{n - 1} (\omega)) \leqslant z_n$ if and only if $T^i \omega \in M \setminus
V_{z_n}$ for $i = 0, \ldots, n - 1$. Denote $\varepsilon=\varepsilon_n:=z_{\max}-z_n$. Then equations (\ref{eq:pineps}) and
(\ref{eq:lambdaeps}) imply the following {\tmem{extreme value law}}
\begin{eqnarray}
  \nu_0 \{\max (X_0, \ldots, X_{n - 1}) \leqslant z_n \} & = & \nu_0 \{\omega
  \in M : T^i \omega \nin V_{z_n} \text{ for } i = 0, \ldots, n - 1\}
  \nonumber\\
  & = & \nu_0 \{\tau_{\varepsilon} \geqslant n\}  \label{eq:EVL}\\
  & = & \exp (- t \theta (1 + o (1))) \nonumber
\end{eqnarray}
in the limit $n \rightarrow \infty$, where also the fact that
$\lim_{\varepsilon \rightarrow 0} \nu_{\varepsilon} (1) = \nu_0 (1) = 1$ (see
{\cite{KL1999}}) is used. In this context the constant $\theta$ defined in
(\ref{eq:xi}) is called the {\tmem{extremal index}}. It follows from Remark
\ref{remark:periodic1} that in case $V_{z_{\max}} =\{x\}$ is a singleton,
$\theta < 1$ is possible only if $x$ is a periodic point. In view of the
results collected in section \ref{sec:review}, this answers a question by
Freitas, Freitas and Todd from the introduction of {\cite{FFT2010b}}. 


  
  

\section{A review of open systems to which the eigenvalue perturbation formula
applies\label{sec:review}}

In this section we discuss some classes of open systems of type REPFO. The
results of section \ref{sec:main} apply to these systems.

\subsection{Piecewise monotone interval maps of Rychlik type}\label{subsec:rychlik}

Rychlik {\cite{rychlik83}} provided a very elegant and useful scheme to treat
piecewise monotone one-dimensional maps $T : [0, 1] \rightarrow [0, 1]$. The
ingredients of his approach are a non-atomic probability measure
$\nu_0$ on $[0, 1]$ with full support and a function $g : [0, 1] \rightarrow
[0, + \infty)$ of bounded variation. It is assumed that
\begin{enumeratealpha}
  \item there is an open set $U \subset [0, 1]$ with $\nu_0 (U) = 1$ such that
  for each connected component $J$ of $U$, the restriction $T_{|J}$ is a
  homeomorphism from $J$ onto $T J$ (denote the collection of open components
  of $U$ by $\beta$),
  
  \item $g_{|S} \equiv 0$ where $S \assign [0, 1] \setminus U$,
  
  \item $\vartheta \assign \lim_{n \rightarrow \infty} \|g_n \|_{\infty}^{1 /
  n} < 1$ where $g_n = g \cdot g \circ T \cdot \ldots \cdot g \circ T^{n -
  1}$, and
  
  \item $\nu_0 (P f) = \nu_0 (f)$ for each bounded measurable $f : [0, 1]
  \rightarrow \mathbbm{R}$ where
  \[ P f (x) = \sum_{y \in T^{- 1} x} g (y) f (y) . \]
\end{enumeratealpha}
Denote by $B V$ the space of functions of bounded variation on $[0, 1]$ and
define the seminorm $|f|_w = \int |f| d \nu_0$ and the norm $\|f\|=
\tmop{Var} (f) + |f|_w$ on $B V$. Then assumption d) implies (\ref{eq:B1}) for
$\varepsilon = 0$, and Proposition 1 of {\cite{rychlik83}} shows that
\begin{equation}
  \forall \alpha \in (\vartheta, 1) \exists C > 0 \forall f \in B V \forall n
  \in \mathbbm{N}: \sum_{B \in \beta^n} \tmop{Var} (P^n (f 1_B)) \leqslant C
  \kappa^n \tmop{Var} (f) + C \int |f| d \nu_0 \label{eq:rychlik}
\end{equation}
where $\beta^n = \beta \vee T^{- 1} \beta \vee \ldots \vee T^{- (n - 1)}
\beta$. This is (\ref{eq:B2}) for $\varepsilon = 0$. Finally, (\ref{eq:B3})
for $\varepsilon = 0$ follows from {\cite[Theorem 1]{rychlik83}}.

Let $z \in (0, 1)$ and consider intervals $A_{\varepsilon} = (z - \varepsilon,
z + \varepsilon) \subset (0, 1)$. If we replace $S$ by $S \cup \{z\}$, all of
the above assumptions a) -- d) remain valid. So w.l.o.g. $z \in S$. The rare
event PF operators $P_{\varepsilon}$ satisfy
\begin{equation}
  |P_{\varepsilon}^n f|_w = \int_{\bigcap_{k = 0}^{n - 1} T^{- k} ([0, 1\}
  \setminus A_{\varepsilon})} |f| d \nu_0 \leqslant \int |f| d \nu_0 = |f|_w\,,
  \label{eq:rychlik2}
\end{equation}
and, observing that $B \cap \bigcap_{k = 0}^{n - 1} T^{- k} ([0, 1\} \setminus
A_{\varepsilon})$ is a subinterval (possibly empty) of $B$ for each $B \in
\beta^n$, also
\begin{equation}
  \tmop{Var} (P_{\varepsilon}^n f) \leqslant \sum_{B \in \beta^n} \tmop{Var}
  (P_{\varepsilon}^n (f 1_B)) \leqslant \sum_{B \in \beta^n} \tmop{Var} (P^n
  (f 1_B)) \leqslant C \kappa^n \tmop{Var} (f) + C|f|_w \label{eq:rychlik3} .
\end{equation}
The uniform estimates (\ref{eq:rychlik2}) and (\ref{eq:rychlik3}) show that
(\ref{eq:B3}) -- (\ref{eq:B2}) are satisfied for all $\varepsilon > 0$. We
turn to (\ref{eq:B4}):
\[ |P_{\varepsilon} f - P f|_w \leqslant \int_{A_{\varepsilon}} |f| d \nu_0
   \leqslant \nu_0 (A_{\varepsilon})\|f\|_{\infty} \leqslant \nu_0
   (A_{\varepsilon})\|f\|. \]
So $\tau_{\varepsilon} = \nu_0 (A_{\varepsilon}) \searrow 0$ as $\varepsilon
\rightarrow 0$, and this is (\ref{eq:B4}). Thus assumption REPFO.i is
verified.

Assume now that $T$ is mixing in the sense of (\ref{eq:B5}) with unique
invariant density $\varphi_0$. As $\varphi_0$ is of bounded variation, the
following extra assumption is not very restrictive:
\\[2mm]
{\tmstrong{Positivity assumption:}} \ $\delta \assign \lim_{\varepsilon
\rightarrow 0} \max \{\inf_{(z - \varepsilon, z)} \varphi_0, \inf_{(z, z +
\varepsilon)} \varphi_0 \}> 0$.
\\[2mm]
Indeed, if $T$ has only finitely many monotone branches, Theorem 8.2.3 in
{\cite{BG-book}} shows that this assumption holds if $z$ belongs to the
support of the invariant measure $f_0 \nu_0$.

Under the positivity assumption,
\[ \|1_{A_{\varepsilon}} f\|= \int_{A_{\varepsilon}} |f| d \nu_0 + \tmop{Var}
   (1_{A_{\varepsilon}} f) \leqslant |f|_w + 2\|f\|_{\infty} + \tmop{Var} (f)
   \leqslant 3\|f\| \]
and
\[ | \nu_0 (1_{A_{\varepsilon}} f) | \cdot \|1_{A_{\varepsilon}} \varphi_0 \|
   \leqslant \nu_0 (A_{\varepsilon})\|f\|_{\infty} \cdot 3\,\| \varphi_0 \| \leqslant 4 \delta^{- 1} \|
   \varphi_0 \| \cdot \nu_0 (1_{A_{\varepsilon}} \varphi_0) \cdot \|f\| \]
for all suficiently small $\varepsilon > 0$. These two estimates yield REPFO.ii and iii.

\begin{remark}
  The class of Rychlik maps contains the classes of maps considered in
  {\cite{BB2010}} and {\cite{LM2003}}. Freitas et al. {\cite{FFT2010b}} proved
  the validity of an extreme value law (\ref{eq:EVL}) for Rychlik maps along
  the classical probabilistic lines initiated by Leadbetter
  {\cite{leadbetter1973,leadbetter1983}}. Exponential statistics for Rychlik
  maps around $\mu_0$-a.e. point $z$ were already derived in
  {\cite{BSTV2003}}.
\end{remark}

See also Remark \ref{remark:rychlik1} for an explicit formula for $\theta$.

\subsection{Piecewise expanding maps in higher dimensions}

For piecewise expanding maps in higher dimensions, the classical BV-space is
the first candidate for a suitable function space $V$
{\cite{GB1989,keller79,thomine2011}}. For the present purpose, however, the
quasi H\"older spaces of Blank {\cite{blank87}} and Saussol {\cite{saussol00}}
are more suited, because, in contrast to classical BV-spaces, these spaces are
algebras and their norms control the $L^{\infty}$-norm {\cite[Proposition
3.4]{saussol00}}.

Without going into details, we consider a piecewise $C^{1 + \alpha}$ map on a
compact set $\Omega \subset \mathbbm{R}^N$ for which expansion wins over the
effects of discontinuities in the sense of {\cite[condition
(PE5)]{saussol00}}. The corresponding PF operator $P$ satisfies (\ref{eq:B3})
-- (\ref{eq:B2}) for $\varepsilon = 0$ in view of {\cite[Lemma 4.1 and Theorem
5.1]{saussol00}}. Fix a point $z$ in the interior of the domain $U_i$ of one
of the branches of $T$, and denote by $A_{\varepsilon}$ the
$\varepsilon$-neighborhood of $z$ (w.r.t. some $\ell_p$-norm in
$\mathbbm{R}^N$). Then the corresponding rare event PF operators
$P_{\varepsilon}$ satisfy the same estimates that $P$ satisfies if the domain
$U_i$ is replaced by $U_i \setminus A_{\varepsilon}$ (for sufficiently small
$\varepsilon$). This modification of $U_i$ increases the constant $G$ in (PE5)
of {\cite{saussol00}}{\footnote{The constant there is denoted $G (\varepsilon,
\varepsilon_0)$; note that our $\varepsilon$ (size of the hole) and the
$\varepsilon$ in $G (\varepsilon, \varepsilon_0)$ are unrelated.}} by at most
$\tmop{const} \varepsilon^{N - 1}$, and it follows from the explicit
expressions for the constants in (\ref{eq:B2}) that are given in the proof of
Lemma 4.1 in {\cite{saussol00}}, that (\ref{eq:B3}) -- (\ref{eq:B2}) are
indeed valid for all sufficiently small $\varepsilon > 0$. Assumption
(\ref{eq:B4}) is satisfied because $\|f\|_{\infty} \leqslant \tmop{const}
\|f\|$ where $\|.\|$ is Saussol's quasi H\"older norm. This is REPFO.i. As
$\|1_{A_{\varepsilon}} \|$ is of the order of $\varepsilon^{N - 1}$, REPFO.ii
follows from $\|1_{A_{\varepsilon}} f\| \leqslant \tmop{const}
\|1_{A_{\varepsilon}} \| \cdot \|f\|$, see {\cite[Proposition
3.4]{saussol00}}. Under the additional {\tmstrong{positivity assumption}}
$\delta \assign \lim_{\varepsilon \rightarrow 0} \inf_{A_{\varepsilon}}
\varphi_0 > 0$, condition REPFO.iii follows immediately from $\|f\|_{\infty}
\leqslant \tmop{const} \|f\|$.

In higher dimensional systems, holes that do not shrink to a point may occur
quite naturally, for example when $A_{\varepsilon}$ is defined as the event
that a state vector has two nearly (i.e.up to $\varepsilon$) identical
coordinates. A simple example of this type, namely a direct product of a
piecewise expanding interval map $T$ with itself, is discussed in
{\cite{CC1994}}, where exponential hitting time statistics are proved (as a
corollary to a Poissonian limit law) and where a (complicated) formula for the
exponent of the limit distribution is derived also in the case where only some
power $T^m$ of the map is uniformly expanding. In {\cite[section 4]{KL2009}}
this formula is rederived from spectral perturbation theory when $T$ itself is
expanding. Indeed, for $m > 1$, too, this formula can be derived from
(\ref{eq:KL2009b}) by a straightforward, though lengthy calculation. In the
same reference also a modification of the product system by some weak coupling
is discussed.

\subsection{Gibbs measures on subshifts of finite type}

Ferguson and Pollicott {\cite{FP2010}} study escape rates for Gibbs measures
with H\"older continuous potentials on subshifts of finite type. They obtain
formula (\ref{eq:KL2009b}) for suitable holes (not just cylinder sets) in this
context and use it to derive from that the corresponding formula for the
escape from conformal repellors relative to equilibrium states, and also a
related formula for the Hausdorff dimension of the set of points on the
repellor that never enter the hole. For deriving formula (\ref{eq:KL2009b})
they produce a function space and two norms fitting the REPFO setting.

\subsection{Topological entropy of perturbations of shifts of finite type}

If a subshift of finite type over a finite alphabet is perturbed by deleting
one or several blocks and if these blocks have small mass under the measure
$\mu_0$ of maximal entropy of the unperturbed subshift, the topological
entropy of the systems drops by a small amount. As the topological entropy is
the logarithm of the leading eigenvalue of the corresponding transfer operator
with constant weight $1$, the amount by which the entropy drops down is
controlled by formula (\ref{eq:KL2009b}) when it is applied to the operators
normalized such that the unperturbed operator has leading eigenvalue $1$.

Lind {\cite[Theorem 3]{lind1989}} provides upper and lower bounds on the
entropy difference when a single (long) block is discarded. These estimates
can be made more precise using formula (\ref{eq:KL2009b}). Details of this in
two slightly different settings can be found in {\cite[section 5.2]{KL2009}}
and in {\cite[Corollary 5.4]{FP2010}}.

\subsection{Further candidates}

The investigation of statistical properties of dynamical systems via the
spectral properties of their associated Perron Frobenius operators is
currently a very active area of research. Some of the available results apply
to piecewise smooth systems and there is good hope that these systems are of
REPFO type. Here we discuss briefly the situation for piecewise hyperbolic
maps and for coupled map lattices of piecewise expanding 1D maps.

\paragraph{Piecewise hyperbolic maps: }Demers and Liverani {\cite{DL2008}}
studied Perron-Frobenius operators for a broad class of piecewise hyperbolic
maps on two-dimensional compact Riemannian manifolds. They provide a normed
space $(\mathcal{B}, \|.\|)$ of distributions and prove in {\cite[Lemmas 3.5
and 6.5, Proposition 6.6]{DL2008}} that assumptions (\ref{eq:B3}) --
(\ref{eq:B4}) hold for many reasonably regular holes $A_{\varepsilon}$. In
particular this is true for disks of radius $\varepsilon$ when $\varepsilon$
is sufficiently small (so that the curvature of $\partial A_{\varepsilon}$ is
large, see {\cite[Remark 2.16]{DL2008}}). Hence REPFO.i is satisfied. A first
step towards a proof of REPFO.iii is the observation that $| \nu_0
(1_{A_{\varepsilon}} f) | \leqslant \varepsilon^{1 + \alpha} \|f\|_s \leqslant
\|f\|$ for all $f \in V$ according to the definition of the stable norm
$\|.\|_s$ and the strong norm $\|.\|$ in {\cite[eq. (2.3)and (2.5)]{DL2008}}.
Therefore, it mainly remains to show that $\varepsilon^{1 + \alpha}
\|1_{A_{\varepsilon}} \varphi_0 \| \leqslant \nu_0 (1_{A_{\varepsilon}}
\varphi_0)$.

Baladi and Gou\"ezel {\cite{BG2009,BG2010}} studied the Perron
Frobenius operator of piecewise hyperbolic systems in any dimension on
certain classes of Triebel spaces which they chose such that
multiplication by a regular set, e.g. a convex set, acts as a bounded
linear operator. This indicates that also their systems might fit into
the REPFO setting.

\paragraph{Coupled map lattice of piecewise expanding interval maps: }Coupled
map lattices (over $\mathbbm{Z}^d$) of piecewise expanding interval maps were
studied in {\cite{KL2006,KL2009a}} using a spectral theoretic approach. Here
$V$ is a space of distributions on $[0, 1]^{\mathbbm{Z}^d}$ that contains in
particular all probability measures whose finite dimensional marginals have
densities of bounded variation with uniform bounds on the variation of all
these marginal densities in all coordinate directions. While {\cite{KL2006}}
treats only coupling mechanisms which are uniformly weak on the whole phase
space, the short note {\cite{KL2009a}} deals with cases where strong (and
discontinuous) coupling effects are allowed on small parts of the phase space.
The estimates used in this context should be helpful also in attempting to
prove REPFO.i-iii.

\end{document}